\documentclass[11pt]{amsart}
\usepackage{epsfig,color}
\usepackage{blindtext}
\usepackage{graphicx}
\usepackage{enumitem}
\usepackage{url}
\usepackage{amssymb}
\usepackage{graphicx,import}
\usepackage{comment}
\usepackage{esint}
\usepackage{xypic}
\usepackage{amsthm}

\usepackage[margin = 1.4in] {geometry}

\usepackage{hyperref}

\numberwithin{equation}{section}

\theoremstyle{theorem}

\newtheorem{theorem}{Theorem}[section]

\newtheorem{lemma}[theorem]{Lemma}

\theoremstyle{definition}

\newtheorem{definition}[theorem]{Definition}
\newtheorem*{remark*}{Remark}

\numberwithin{equation}{section}

\author{Elham Matinpour}
\address{}
\email{}

\subjclass[2020]{Primary 53A10; Secondary 49Q05, 53C42}
\keywords{free-boundary minimal surfaces, Plateau singularities, tetrahedral cone, conformal minimal immersions, rigidity}

\title{Free Boundary Plateau Model Cones in $\mathbb{B}^n$ are Rigid under Conformal Minimal Immersions}

\begin{document}

\maketitle

% ABSTRACT
\begin{abstract}
The classical theorem of Nitsche~\cite{nitsche1985stationary} asserts that every free-boundary minimal disk in the unit ball \(\mathbb{B}^3\) is an equatorial plane disk. Fraser and Schoen~\cite{fraser2015uniqueness} extended this rigidity result to all dimensions and to spaces of constant sectional curvature, showing in particular that such disks are totally geodesic. In a previous work~\cite{matinpour2025cone}, the author established an analogue for singular \(Y\)-type surfaces: any conformal minimal immersion of the flat \(Y\)-cone into \(\mathbb{B}^n\) that meets the boundary sphere orthogonally must coincide with the flat \(Y\)-cone itself (up to orthogonal transformation).

In the present paper we push this program one step further by treating the more intricate tetrahedral singularity. We prove that every conformal and minimal immersion of the flat \(T\)-cone into the unit ball \(\mathbb{B}^n\), subject to the free-boundary orthogonality condition, is necessarily congruent to the flat \(T\)-cone. Combining this with the smooth-disk result of Nitsche--Fraser--Schoen and the \(Y\)-cone result of~\cite{matinpour2025cone}, we obtain a unified uniqueness theorem: any free-boundary minimal Plateau surface in \(\mathbb{B}^n\) that is conformal to one of the reference model domains (a disk in the plane, the \(Y\)-cone, or the \(T\)-cone) must itself be congruent to that model.
\end{abstract}

\section{Introduction}
This paper continues the line of investigation initiated in~\cite{matinpour2025cone}, where the uniqueness of the flat $Y$-cone was established among all free-boundary minimal $Y$-surfaces that are conformal to the model $Y$-cone. Here we address the next natural singular model---the tetrahedral \(T\)-cone---and thereby complete the picture for all admissible Plateau-type singularities under the conformality assumption.\\
In the smooth (non-singular) setting the foundational rigidity result is due to Nitsche~\cite{nitsche1985stationary}, who proved that any free-boundary minimal disk in \(\mathbb{B}^3\) is an equatorial flat disk. Fraser and Schoen~\cite{fraser2015uniqueness} later generalized this statement to arbitrary dimension and to ambient manifolds of constant curvature showing that such disks are totally geodesic (and, under an additional parallel mean-curvature assumption, totally umbilic in a three-dimensional constant-curvature submanifold). These theorems provide the regular building block of our program.\\
For Plateau surfaces the situation is substantially more delicate because of the presence of the classical \(Y\)- and \(T\)-junction singularities. The treatment of singular cases is settled by exploiting the conformal structure on each sector together with the minimality configuration along the singular curves and the free boundary that forces planarity of every face. 

Our main results are the following two theorems.

\begin{theorem}\label{thm:uniq.TC}
Let \(TC\) denote the flat \(T\)-cone. Suppose
\[
u : TC \to \mathbb{B}^n
\]
is a conformal minimal immersion such that \(u(TC)\) meets \(\partial \mathbb{B}^n\) orthogonally. Then \(u(TC)\) coincides with the flat \(T\)-cone up to an orthogonal transformation, i.e.,
\[
u(TC) = A(TC) \quad \text{for some } A \in \mathrm{O}(n).
\]
\end{theorem}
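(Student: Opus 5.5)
The plan is to argue face by face and then glue along the singular set. The flat $T$-cone $TC$ is the cone over the $1$-skeleton of a regular tetrahedron inscribed in $\partial\mathbb{B}^3$, intersected with $\mathbb{B}^3$. It consists of six flat circular sectors $F_1,\dots,F_6$, each of angle $\theta_0=\arccos(-1/3)$. Three sectors meet at $120^\circ$ along each of four segments (the $Y$-lines) from the origin to $\partial\mathbb{B}^3$, and all six meet at the vertex $0$. On each sector $F_i$ I use the polar conformal coordinate $z=re^{i\phi}$ with $0<r<1$ and $0<\phi<\theta_0$. Then $u_i:=u|_{F_i}$ is a conformal harmonic map with three boundary pieces: the arc $r=1$, where the free-boundary condition gives $u_i\in\partial\mathbb{B}^n$ and $\partial_r u_i=\lambda u_i$ for some $\lambda>0$; the two radial edges $\phi=0$ and $\phi=\theta_0$, which lie on $Y$-lines; and the corner at $0$.

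\textbf{Step 1: each face is planar.} I follow the Nitsche/Fraser--Schoen scheme, as in the $Y$-cone paper \cite{matinpour2025cone}, but apply it locally on each face. Consider the holomorphic quadratic differential $\Phi_i=(z\,\partial_z u_i\cdot z\,\partial_z u_i)$ in the log-polar coordinate $w=\log z$, written explicitly as
\[
\Phi_i=\bigl(|\partial_t u_i|^2-|\partial_\phi u_i|^2-2i\,\partial_t u_i\cdot\partial_\phi u_i\bigr)\,dw^2, \qquad t=\log r .
\]
Conformality gives $\partial_z u\cdot\partial_z u=0$. This is exactly the quantity used in the disk case; there it is the Hopf-type differential of the position vector, $\langle u,\partial_z u\rangle^2$-type combinations. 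Concretely, I will use the function $g_i=z\,\langle u_i,\partial_z u_i\rangle$ and the differential $(\partial_z u_i\cdot\partial_z u_i)$ together with $\psi_i=\langle u_i,\partial_z u_i\rangle/z$. The free-boundary condition makes $\operatorname{Im}(z\langle u_i,\partial_z u_i\rangle)=0$ on $r=1$, since it equals $\tfrac12\partial_\phi|u_i|^2=0$. On the radial edges, the balancing condition at a $Y$-line (the three conormals summing to zero), which I take as part of the minimality of the Plateau immersion, should let me reflect or sum the three adjacent faces. The summed quantity $\sum_i \operatorname{Im}(\cdot)$ then has vanishing jump, which gives a single holomorphic object on a branched domain.

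\textbf{Step 2: vanishing.} Then I show that the relevant holomorphic function vanishes. The aim is to prove $\langle u,\partial_\phi u\rangle\equiv 0$, i.e. $|u|$ is radial. Once that holds, $u$ maps rays to rays, so $u_i(re^{i\phi})=r\,u_i(e^{i\phi})$ is homogeneous of degree one. A degree-one homogeneous conformal harmonic map of a sector is the restriction of a linear isometry onto a $2$-plane, so $u(F_i)$ is a flat sector through $0$ of angle $\theta_0$. The main obstacle is this step. The differential is defined on a singular complex (the cone over a graph) rather than a disk, so Riemann--Roch or the maximum principle is not immediately available. I would first try a maximum-principle argument on the real harmonic function $h=\sum_i\operatorname{Im}(z\langle u_i,\partial_zu_i\rangle)$ on the whole cone. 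It vanishes on the outer arcs, it is continuous across $Y$-lines by the balancing (this must be checked carefully using $u$ being $C^1$ up to edges), the harmonic-flux condition holds at edges, and it is bounded near $0$ with value $0$ there. A Plateau-type maximum principle, where no interior maximum can occur on edges thanks to flux balancing, then forces $h\equiv0$.

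\textbf{Step 3: gluing.} Six flat sectors through $0$, of angle $\theta_0$, meet in threes at $120^\circ$ along four rays from $0$ to $\partial\mathbb{B}^n$. The four ray directions $v_1,\dots,v_4$ are unit vectors with pairwise angle $\theta_0$, since each sector spans the angle between its two edge rays. So their Gram matrix equals that of a regular tetrahedron, which has rank $3$. Hence there is $A\in\mathrm{O}(n)$ mapping the model rays to $v_1,\dots,v_4$, and then each sector to the corresponding sector, giving $u(TC)=A(TC)$.
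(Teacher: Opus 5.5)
\textbf{There is a genuine gap in Steps 1--2: none of the objects you list is a nontrivial holomorphic or harmonic quantity.}

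\emph{The quantities are trivial or non-holomorphic.} Conformality gives $\partial_z u_i\cdot\partial_z u_i\equiv 0$. Your $\Phi_i$ is the same quantity written in $w=\log z$ (since $z\partial_z=\partial_w$), so it vanishes identically. On the other hand, $\partial_{\bar z}\langle u_i,\partial_z u_i\rangle=\langle\partial_{\bar z}u_i,\partial_z u_i\rangle=\tfrac14|\nabla u_i|^2\neq 0$, so $g_i$ and $\psi_i$ are not holomorphic. Concretely, $\operatorname{Im} g_i=-\tfrac14\partial_\phi|u_i|^2$, and $\Delta\,\partial_\phi|u_i|^2=\partial_\phi\Delta|u_i|^2=2\,\partial_\phi|\nabla u_i|^2$. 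This is nonzero unless the conformal factor is independent of $\phi$. So the function $h$ of Step 2 is not harmonic on any face, and no maximum principle applies to it.

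\emph{Two further symptoms.} First, $\operatorname{Im} g_i=0$ on $r=1$ uses only $|u_i|=1$ there, not the orthogonality condition. Second, at a $Y$-line the balance $\tau_1+\tau_2+\tau_3=0$ says the three outward normal derivatives of $|u|^2$ sum to zero. It does not say that the one-sided values of $\partial_\phi|u_i|^2$ agree, so it gives no continuity of $h$ across $Y$-lines.

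\emph{The inference from radial $|u|$ is false.} Even granting $\langle u_i,\partial_\phi u_i\rangle\equiv 0$, the step ``$|u|$ radial, hence rays go to rays'' fails. The map $u(z)=(z,z^2)\in\mathbb{C}^2\cong\mathbb{R}^4$ is a conformal harmonic immersion with $|u|^2=r^2+r^4$, yet it is not homogeneous. You would need $\partial_r u_i\parallel u_i$ on the whole face, and you only know this on the outer arc.

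\emph{The missing ingredient is the Hopf differential $\langle\partial_z^2u_i,\partial_z^2u_i\rangle$, which is what the paper uses.}
\begin{itemize}
\item It is holomorphic because $\partial_{\bar z}\partial_z u_i=0$, and by conformality it equals $\langle(\partial_z^2u_i)^\perp,(\partial_z^2u_i)^\perp\rangle$.
\item The orthogonality condition $\partial_r u_i=f\,u_i$ on the outer arc gives $(\partial_r\partial_\phi u_i)^\perp=0$ there. Hence $z^4\langle\partial_z^2u_i,\partial_z^2u_i\rangle$ is real on the arc, and it vanishes at the vertex.
\item The paper then combines the faces, using the junction matching of $u,\partial_r u,\partial_r^2u$ and the $120^\circ$ balance, to get reality on the radial edges as well, and hence vanishing.
\item With minimality, this kills the second fundamental form along the free-boundary arcs, so they are great-circle arcs.
\item The rigidity lemma for tetrahedral stationary geodesic networks in Section 3 then makes the boundary network regular.
\item The reflection and unique-continuation argument of Section 4 then makes each face planar with radial junctions.
\end{itemize}
Your Step 3 is fine once each face is known to be a linear isometric image of its sector, but Steps 1--2 as written do not deliver that.
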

Combining Theorem~\ref{thm:uniq.TC} with the classical rigidity theorem of Nitsche--Fraser--Schoen for free-boundary minimal disks and the uniqueness result for the flat \(Y\)-cone established in~\cite{matinpour2025cone}, we obtain a unified rigidity theorem for all classical two-dimensional Plateau model minimal surfaces.

\begin{theorem}\label{thm:uniq.PlateauSurfaces}
Let \(\Sigma\) be a free-boundary minimal Plateau surface in \(\mathbb{B}^n\) arising as the image of a conformal minimal immersion of one of the classical model domains: a planar disk, the flat \(Y\)-cone, or the flat \(T\)-cone. Then \(\Sigma\) is congruent, via an orthogonal transformation of \(\mathbb{R}^n\), to the corresponding flat model.
\end{theorem}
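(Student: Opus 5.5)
The statement is Theorem~\ref{thm:uniq.PlateauSurfaces}. It is a combination result, so I plan to prove it by splitting into three cases according to the model domain $D$ of which $\Sigma$ is the image, and in each case invoking the appropriate rigidity theorem. Throughout, let $u : D \to \mathbb{B}^n$ be the conformal minimal immersion with $u(D)=\Sigma$. The free-boundary hypothesis means that $\Sigma$ meets $\partial\mathbb{B}^n$ orthogonally along $u(\partial D)$.

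\emph{Case $D$ a planar disk.} Here $\Sigma$ is a free-boundary minimal disk in $\mathbb{B}^n$, and $u$ has no interior singularities. The Fraser--Schoen extension of Nitsche's theorem~\cite{nitsche1985stationary, fraser2015uniqueness} shows that $\Sigma$ is totally geodesic. A totally geodesic surface in $\mathbb{R}^n$ lies in a $2$-plane $P$. Orthogonality to the sphere forces $P$ to pass through the origin: the conormal along the boundary is the position vector, so $P$ contains the radial directions of its boundary points. Hence $\Sigma=P\cap\mathbb{B}^n$ is an equatorial disk. An element $A\in\mathrm{O}(n)$ taking $\mathbb{R}^2\times\{0\}$ to $P$ then gives the congruence.

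\emph{Case $D$ the flat $Y$-cone.} This is exactly the uniqueness theorem of~\cite{matinpour2025cone}. It applies verbatim, since its hypotheses (a conformal minimal immersion of the $Y$-cone meeting $\partial\mathbb{B}^n$ orthogonally) are precisely the present ones. It yields $\Sigma=A(YC)$ for some $A\in\mathrm{O}(n)$.

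\emph{Case $D$ the flat $T$-cone.} This is Theorem~\ref{thm:uniq.TC}, which gives $\Sigma=A(TC)$ for some $A\in\mathrm{O}(n)$.

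The main obstacle is not in this combination step but in Theorem~\ref{thm:uniq.TC} itself, which I am taking as given. The only points needing care here are interpretive. First, the conformal structure of the domain must be the one used in each cited result, so that, for instance, a Plateau surface parametrized by the $Y$-cone really satisfies the hypotheses of~\cite{matinpour2025cone}. Second, in the disk case the Fraser--Schoen conclusion of total geodesicity must be converted into ``equatorial'' via the orthogonality argument given above. Since the three model domains are mutually non-homeomorphic, with no singular set, a single singular curve of triple junctions, and a singular point with four triple curves respectively, the cases are exhaustive and disjoint. The theorem follows.
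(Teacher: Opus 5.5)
Your proposal is correct and matches the paper's argument. The paper likewise obtains Theorem~\ref{thm:uniq.PlateauSurfaces} as a direct corollary, invoking the Fraser--Schoen theorem for disks, the $Y$-cone theorem of~\cite{matinpour2025cone}, and Theorem~\ref{thm:Tcone} for the $T$-cone; your extra step turning ``totally geodesic'' into ``equatorial'' via the free-boundary condition is a valid detail the paper leaves implicit.
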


Thus, under the conformality assumption, the standard flat disk, the flat \(Y\)-cone, and the flat \(T\)-cone are rigid among all free-boundary minimal Plateau surfaces modeled on the corresponding singularity type. In this way, the present work completes a unified rigidity theory for the classical smooth, \(Y\)-type, and tetrahedral Plateau configurations.\\
For the precise definitions of Plateau surfaces, the model cones, the free-boundary condition, and the notion of conformality used throughout the paper, we refer the reader to the Preliminaries (Section~\ref{Sec1-Preliminaries}).

The remainder of the paper is organized as follows. In Section~\ref{sec:uniq-tsg-net}, we show that any stationary geodesic network on $\mathbb{S}^n$ with tetrahedral combinatorics coincides with the regular tetrahedral network. In Section~\ref{sec:uniq. FBM TS with NT}, we establish that free-boundary minimal $T$-surfaces spanning a regular tetrahedral network on $\mathbb{S}^n$ are uniquely given by the regular tetrahedral cone $TC$. Section~\ref{sec:uniq. Tcone} completes the proof of Theorem~\ref{thm:uniq.TC} by analyzing the Hopf differential on the faces, showing that the tetrahedral network is composed of geodesic arcs, and then applying the rigidity result from Section~\ref{sec:uniq. FBM TS with NT}. Finally, the global result (Theorem~\ref{thm:uniq.PlateauSurfaces}) follows by invoking known results for the regular and $Y$-cases.

\subsection*{Acknowledgment} I would like to thank Professor Jaigyoung Choe for suggesting the problem.

\section{Preliminaries}\label{Sec1-Preliminaries}

We consider two-dimensional rectifiable currents (more precisely, integral currents) in $\mathbb{R}^n$. A rectifiable current $\Sigma$ is said to be a \emph{Plateau surface} if its support satisfies the following local structural conditions.

\begin{itemize}
    \item The support of $\Sigma$ is locally $C^{1,\alpha}$-diffeomorphic (for some $\alpha \in (0,1)$) to one of the model cones: a plane $P$, a half-plane $H$, a $Y$-cone $Y$, or a $T$-cone $T$. Here:
    \begin{itemize}
        \item $P$ is a $2$-dimensional affine plane in $\mathbb{R}^n$ (the local model for boundary points and free-boundary arcs),
        \item $H$ is a $2$-dimensional half-plane (the local model for boundary points and free-boundary arcs),
        \item $Y$ is the union of three half-planes meeting along a common line at $120^\circ$ angles (the local model for $Y$-junctions),
        \item $T$ is the cone over the $1$-skeleton of a regular tetrahedron centered at the origin (the local model for $T$-points).
    \end{itemize}
    More precisely, for every point $p \in \mathrm{spt}(\Sigma)$ there exist a neighborhood $N(p) \subset \mathrm{spt}(\Sigma)$ and a $C^{1,\alpha}$ diffeomorphism $u : N(p) \to u(N(p))$ mapping $N(p)$ onto one of the four model cones.
    
    \item At each point $p \in \mathrm{spt}(\Sigma)$, the tangent cone $\mathrm{Tan}(\Sigma, p)$, in the varifold or current sense, coincides (up to orthogonal transformation) with one of the four model cones $P$, $H$, $Y$, or $T$.
\end{itemize}

A Plateau surface $\Sigma$ is called \emph{minimal} if it is area-stationary, i.e., a stationary point for the area functional, or equivalently, it has a weakly vanishing mean curvature at regular points.
\medskip

We further assume that $\Sigma$ is \emph{two-sided}, meaning that it separates in $\mathbb{R}^n$ locally: around every interior point there is a small ball in which $\Sigma$ divides the ball into exactly two connected components.\\
More globally, $\Sigma$ defines a cell decomposition of an open set $U \subset \mathbb{R}^n$ if there is a finite or countable family of open connected sets $\{U^i\}$ (the \emph{cells}) such that $U \setminus \mathrm{spt}(\Sigma) = \bigcup_i U^i$ (disjoint union), $\partial \Sigma \subset \partial U$, and for every $p \in \mathrm{spt}(\Sigma) \cap U$ there exists $r>0$ such that for all $0 < r' < r$ and every cell $U^i$, $B_{r'}(p) \cap U^i$ is connected (possibly empty). Here, $B_r(p)$ is the ball with radius $r$ and centered at $p$ in $\mathbb{R}^n$.
\medskip

In this paper we focus on \emph{$T$-surfaces}: minimal Plateau surfaces that have at least one $T$-point.

Points are classified as follows:
\begin{itemize}
    \item $p$ is an \emph{interior point} if $\mathrm{Tan}(\Sigma, p) \cong P$ (regular point),
    \item $p$ is a \emph{boundary point} if $\mathrm{Tan}(\Sigma, p) \cong H$,
    \item $p$ is a \emph{$Y$-point} if $\mathrm{Tan}(\Sigma, p) \cong Y$,
    \item $p$ is a \emph{$T$-point} if $\mathrm{Tan}(\Sigma, p) \cong T$.
\end{itemize}

We denote:
\begin{align*}
    \mathrm{reg}(\Sigma) &:= \{p \in \Sigma : p \text{ is interior}\}, \\
    \mathcal{R} &:= \text{connected components of } \mathrm{reg}(\Sigma) \quad (\text{the regular faces}), \\
    \partial \Sigma &:= \{p \in \Sigma : p \text{ is a boundary point}\}, \\
    \mathcal{B} &:= \text{connected components of } \partial \Sigma \quad (\text{boundary arcs}), \\
    \Sigma_Y &:= \{p \in \Sigma : p \text{ is a } Y\text{-point}\}, \\
    \mathcal{Y} &:= \text{connected components of } \Sigma_Y \quad (\text{$Y$-junctions}), \\
    \Sigma_T &:= \{p \in \Sigma : p \text{ is a } T\text{-point}\}.
\end{align*}
A \emph{$T$-surface with a single $T$-point} has exactly one point in $\Sigma_T$, four $Y$-junctions emanating from the $T$-point, and six regular faces. We work with compact Plateau surfaces with boundary, naturally modeled as an integral current.\\
Define the quarter-disk
\[
\hat{D} = \{ (x,y) \in \mathbb{R}^2 : x^2 + y^2 \leq 1,\ x \geq 0,\ y \geq 0 \},
\]
with boundary parts
$$\gamma_1 = \{ (0,y) : 0 \leq y \leq 1 \},\ \
\gamma_2 = \{ (x,0) : 0 \leq x \leq 1 \},\ \
\sigma = \{ (x,y) : x^2 + y^2 = 1,\ x \geq 0,\ y \geq 0 \}.$$
Let $P$ be a regular tetrahedron in $\mathbb{R}^n$ centered at the origin with vertices $v_1,\dots,v_4 \in \mathbb{S}^{n-1}$. Let $E(P)$ be its $1$-skeleton (six edges). The \emph{flat $T$-cone} is
\[
TC = \{ t x : x \in E(P),\ 0 \leq t \leq 1 \},
\]
the union of six flat sectors from the origin over the edges of $P$. The four radial segments
\[
\Gamma_i = \{ t v_i : 0 \leq t \leq 1 \}, \quad i=1,\dots,4,
\]
are the \emph{$Y$-junctions} of $TC$, along each of which three faces meet at $120^\circ$.\\
We write 
$$TC = \bigcup_{j=1}^6 F_j,$$ where each $F_j$ is a flat sector diffeomorphic to $\hat{D}$, with $\partial F_j = \Gamma_{j_1} \cup \Gamma_{j_2} \cup \partial' F_j$ so that $\Gamma_{j_1},\Gamma_{j_2}$ are junctions and $\partial' F_j$ is a boundary arc on $\mathbb{S}^{n-1}$.

\begin{definition}
A map $\varphi = (\varphi_1,\dots,\varphi_6) : TC \to \mathbb{R}^n$ is a \emph{minimal immersion} if:
\begin{enumerate}
    \item[(i)] each $\varphi_j : F_j \to \mathbb{R}^n$ is a smooth minimal immersion,
    \item[(ii)] along each junction $\Gamma_k$ the three adjacent faces have the same image $\tilde{\Gamma}_k = \varphi(\Gamma_k)$,
    \item[(iii)] the outward unit conormals $\tau_1,\tau_2,\tau_3$ of the three faces along $\tilde{\Gamma}_k$ satisfy $\tau_1 + \tau_2 + \tau_3 = 0$,
    \item[(iv)] at the vertex the local configuration is that of the $T$-cone.
\end{enumerate}
The image $\Sigma := \varphi(TC) = \bigcup_{j=1}^6 \Sigma_j$ (with junctions $\tilde{\Gamma}_k$) is then a \emph{minimal $T$-surface}.
\end{definition}

\begin{definition}
A minimal $T$-surface $\Sigma = \bigcup_{j=1}^6 \Sigma_j$ in the unit ball $\mathbb{B}^n$ is \emph{free-boundary} if:
\begin{enumerate}
    \item each $\Sigma_j$ is minimally immersed in $\mathbb{B}^n$,
    \item $\partial \Sigma \subset \mathbb{S}^{n-1}$,
    \item $\Sigma$ meets $\mathbb{S}^{n-1}$ orthogonally,
    \item along each junction the conormal balance $\tau_1 + \tau_2 + \tau_3 = 0$ holds,
    \item the vertex configuration is tetrahedral.
\end{enumerate}
\end{definition}

We equip $TC$ with the flat Euclidean metric induced from $\mathbb{R}^n$, inducing compatible metrics $g_j$ on each $F_j$ that agree along junctions.\\
The $T$-surface $\Sigma$ can be equipped with an abstract metric,
not necessarily induced by a Euclidean metric.
Let $g_j$ be a Riemannian metric on $\Sigma_j$.
\medskip

We say that \emph{$(g_1,\ldots,g_6)$ defines a metric on $\Sigma$}
if the induced metrics agree along each junction curve.
More precisely, if three faces $\Sigma_{i},\Sigma_{j},\Sigma_{k}$
meet along $\Gamma_\ell$, then
\[
g_i|_{\Gamma_\ell} = g_j|_{\Gamma_\ell} = g_k|_{\Gamma_\ell}.
\]
Equivalently, all three faces induce the same metric
\[
g_{\Gamma_\ell}
\]
on the junction curve $\Gamma_\ell$.\\
Given such a metric $g$ on $\Sigma$, we define the unit outer conormal
vector field $\tau_j$ to $\Sigma_j$ along each junction curve.\\
Let $\eta$ be the unit tangent vector field along a junction curve
$\Gamma_\ell$.
If the three faces $\Sigma_i,\Sigma_j,\Sigma_k$ meet along $\Gamma_\ell$,
their geodesic curvatures are defined by
\[
\kappa_i = g_i(\nabla_{\eta}^{\Sigma_i} \eta ,\tau_i), \quad
\kappa_j = g_j(\nabla_{\eta}^{\Sigma_j} \eta ,\tau_j), \quad
\kappa_k = g_k(\nabla_{\eta}^{\Sigma_k} \eta ,\tau_k).
\]

\begin{definition}
We say that the metric $g$ on $\Sigma$ is \emph{compatible with the
$T$–structure} if along each junction curve $\Gamma_\ell$ the geodesic
curvatures satisfy
\[
\kappa_i + \kappa_j + \kappa_k = 0 .
\]
\end{definition}

\begin{definition}
A map
\[
u=(u_1,\ldots,u_6):TC \to \mathbb{R}^n
\]
is called \emph{conformal} if each
\[
u_j : F_j \to \mathbb{R}^n
\]
is conformal with respect to $g_j$ and the induced metric on $u_j(F_j)$, and the maps agree on junctions: 
\[
u_i(p)=u_j(p)=u_k(p),
\]
for $p$ on a junction shared by $F_i,F_j,F_k$.
\end{definition}

\section{Rigidity of Tetrahedral Stationary Geodesic Networks}
\label{sec:uniq-tsg-net}

In this section we show that a stationary geodesic network on the sphere with
tetrahedral combinatorics must coincide with the regular tetrahedral
network.

\begin{definition}
Let $\mathcal{N}$ be a finite union of arcs on the unit sphere
$\mathbb S^{n-1}$ meeting at vertices.
We say that $\mathcal{N}$ is a \emph{stationary geodesic network}
if the following conditions hold:

\begin{enumerate}
\item Each edge of $\mathcal{N}$ is a geodesic arc of $\mathbb S^{n-1}$.

\item At each vertex exactly three edges meet.

\item (120$^\circ$ equilibrium condition)
If $v$ is a vertex and $\tau_1,\tau_2,\tau_3$ are the unit tangent
vectors of the three edges at $v$, then
\[
\tau_1+\tau_2+\tau_3=0 .
\]
\end{enumerate}
\end{definition}

\begin{definition}
A stationary geodesic network $\mathcal N\subset \mathbb S^{n-1}$ is said
to have \emph{tetrahedral combinatorics} if

\begin{itemize}
\item it has four vertices,
\item six edges,
\item each pair of vertices is connected by an edge.
\end{itemize}
\end{definition}

\begin{definition}
    Let $\mathcal{NT}\subset \mathbb S^{n-1}$ be a tetrahedral network, we say that $\mathcal{NT}$ is \emph{the regular tetrahedral network} if 

    \begin{itemize}
        \item $\mathcal{NT}\subset \mathbb{S}^2 \subset \mathbb S^{n-1},$
        \item $\mathcal{NT}=TC\cap \mathbb{S}^2,$
    where $TC$ is the $T$-cone defined in Section 2.
         \end{itemize}
       These two conditions are equivalent to for any two vertices
       $$
       v_i\cdot v_j=-\frac{1}{3},\ \ \  (i\neq j).
       $$
       In particular, $\mathcal{NT}$ is a stationary geodesic network which has a tetrahedral combinatorics.  
\end{definition}
 
\begin{lemma}\label{lem: uniq. of RT-network}
Let $\mathcal{N} \subset \mathbb{S}^{n-1}$ be a stationary geodesic network with tetrahedral combinatorics, i.e., it has exactly four vertices, six geodesic edges, and every pair of vertices is connected by an edge. Then $\mathcal{N}$ is isometric to the regular tetrahedral network, that is, there exists an orthogonal transformation $A \in \mathrm{O}(n)$ such that
$
\mathcal{N} = A \bigl( \mathcal{NT} \bigr)$.\\
In particular, all edges of $\mathcal{N}$ have the same spherical length $\arccos(-1/3)$.
\end{lemma}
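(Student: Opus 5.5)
The plan is to reduce to the two-sphere and then use the rigidity of spherical triangles with prescribed angles (AAA congruence). Write the vertices as $v_1,\dots,v_4\in\mathbb S^{n-1}$, let $\ell_{ij}\in(0,2\pi)$ be the length of the geodesic edge joining $v_i$ to $v_j$, and let $\tau_{ij}$ be its unit tangent at $v_i$.

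First I will exclude the degenerate case $\ell_{ij}=\pi$, where the direction of the edge is not determined by its endpoints. I expect to argue that for an antipodal pair the balancing condition at $v_i$ and at $v_j=-v_i$ conflicts with the two remaining vertices being joined to both. I expect this to need a small separate argument. Otherwise, since $v_j=\cos\ell_{ij}\,v_i+\sin\ell_{ij}\,\tau_{ij}$, we have
\[
\tau_{ij}=\frac{v_j-\cos\ell_{ij}\,v_i}{\sin\ell_{ij}},
\]
and this holds for any $\ell_{ij}\neq 0,\pi$. The balancing condition $\sum_{j\neq i}\tau_{ij}=0$ at $v_i$ then becomes a linear relation
\[
\sum_{j\neq i}\frac{1}{\sin\ell_{ij}}\,v_j=\Bigl(\sum_{j\neq i}\frac{\cos\ell_{ij}}{\sin\ell_{ij}}\Bigr)v_i .
\]
The coefficients of $v_j$ in this relation are nonzero, so $v_1,\dots,v_4$ are linearly dependent and span a subspace $V$ of dimension at most $3$. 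Every edge lies in the great circle through its endpoints, hence in $V$. Therefore, after an orthogonal transformation, $\mathcal N$ lies in a great $\mathbb S^2$. (If $\dim V=2$, all edges lie on a single great circle, and three distinct edges meeting at a vertex with tangents summing to zero cannot lie on one circle. This case is therefore excluded.)

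Next I work inside $\mathbb S^2$. At each vertex the three unit tangents satisfy $\tau_1+\tau_2+\tau_3=0$, so they make pairwise angles of $120^\circ$. I will then argue that the edges are pairwise disjoint except at shared endpoints. This is the main obstacle, since the definition does not explicitly require embeddedness. If crossings cannot be ruled out directly, I will use minimal length ($\ell_{ij}<\pi$ after the antipodal case, plus a crossing argument with the linear relations) or state embeddedness as implicit in ``network''. Given an embedding, $\mathcal N$ is a planar embedding of $K_4$. By Euler's formula it divides $\mathbb S^2$ into four faces, each bounded by three edges, i.e.\ each a spherical triangle. At each vertex the three faces meeting there each receive an interior angle of exactly $120^\circ$.

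Hence every face is a spherical triangle with all angles $2\pi/3$, which by Gauss--Bonnet has area $\pi$. By the AAA congruence theorem of spherical geometry, a spherical triangle is determined up to isometry by its angles. So all four faces are congruent equilateral triangles. The spherical law of cosines for angles,
\[
\cos\ell=\frac{\cos A+\cos^2A}{\sin^2A}\quad\text{with }A=2\pi/3,
\]
gives $\cos\ell=-1/3$. Thus $v_i\cdot v_j=-1/3$ for all $i\neq j$. The Gram matrix of $(v_1,\dots,v_4)$ therefore coincides with that of the vertices of the regular tetrahedron, so some $A\in\mathrm O(n)$ maps the regular vertices to $v_1,\dots,v_4$. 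Since each edge is the minor geodesic arc (its length is $\arccos(-1/3)<\pi$), this $A$ maps the edges of $\mathcal{NT}$ onto those of $\mathcal N$, so $\mathcal N=A(\mathcal{NT})$. This also gives the stated edge length $\arccos(-1/3)$.
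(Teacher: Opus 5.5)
\textbf{There is a genuine gap: embeddedness is never proved, and the lemma is false without an extra hypothesis.} Your outline is the paper's: a great $\mathbb S^2$, Euler's formula for an embedded $K_4$, four triangles with all angles $120^\circ$, then spherical trigonometry. The steps you do carry out are sound. Your dual law of cosines is even cleaner than the paper's law-of-sines step, which needs $\sin$ to be injective on $(0,\pi)$.

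The gap is the one you flag yourself: embeddedness, on which the whole face argument rests. Your proposed source for it fails. Ruling out $\ell_{ij}=\pi$ does not give $\ell_{ij}<\pi$, because your setup allows $\ell_{ij}\in(\pi,2\pi)$. With such edges the lemma is false. Join the vertices of a regular tetrahedron by the six \emph{major} arcs. At each $v_i$ the initial tangents are now $-\tau_{ij}$, which still sum to zero, so all three listed conditions of a stationary geodesic network hold with tetrahedral combinatorics. Yet the edges have length $2\pi-\arccos(-1/3)$, and the three arcs leaving $v_i$ all cross at $-v_i$.

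So you must add a hypothesis explicitly, e.g.\ $\ell_{ij}<\pi$ for every edge. The paper slips one in tacitly (``since edges are shortest geodesics, the graph is embedded'', itself asserted without proof). Merely declaring embeddedness would still leave you to show each face has sides shorter than $\pi$ before invoking AAA.

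\textbf{Closing the gap once $\ell_{ij}\in(0,\pi)$.} Your own linear relations give embeddedness:
\begin{itemize}
\item All coefficients $1/\sin\ell_{ij}$ are positive.
\item Since $v_1,\dots,v_4$ span $\mathbb R^3$, the relations obtained at $v_1$ and at $v_2$ are proportional.
\item Comparing the coefficients of $v_3$ shows the factor is positive, so the coefficient of $v_1$ is positive too, and hence all four are.
\item Thus $0$ lies in the interior of $\operatorname{conv}\{v_1,\dots,v_4\}$.
\item Radial projection of this tetrahedron's boundary is then a homeomorphism onto $\mathbb S^2$ carrying each segment $[v_i,v_j]$ onto the edge $v_iv_j$.
\end{itemize}

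Simpler still, you can bypass embeddedness entirely. For any three vertices, $\tau_{ij}\cdot\tau_{ik}=-\tfrac12$ keeps $v_i,v_j,v_k$ off a common great circle. The angle at $v_i$ of the spherical triangle $v_iv_jv_k$ is by definition the angle between $\tau_{ij}$ and $\tau_{ik}$, namely $120^\circ$. So every triple spans a triangle with all angles $120^\circ$. Your dual law of cosines then gives $v_i\cdot v_j=-1/3$ for all $i\neq j$ directly, with no reduction to $\mathbb S^2$ and no Euler's formula.

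\textbf{The antipodal case.} The contradiction does not come from balancing at $v_i$ and $-v_i$: three meridians $120^\circ$ apart are balanced at both poles. It comes at a third vertex $v_k$. Its edges to $v_i$ and to $-v_i$ lie on one great circle, forcing $\tau_{ki}=\pm\tau_{kj}$, which is incompatible with balancing.
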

This result is classical in the context of Steiner networks and 
spherical geometry; we include a proof for completeness.
\begin{proof}
Let $V = \{v_1, v_2, v_3, v_4\} \subset \mathbb{S}^{n-1}$ be the four vertices of $\mathcal{N}$ and let $E$ be the set of six geodesic arcs connecting every pair of vertices.
\medskip

\textit{First,} we show that the network lies in a great 2-sphere. \\ 
Fix any vertex $v \in V$. By definition of a stationary geodesic network, exactly three edges emanate from $v$ and their unit tangent vectors $\tau_1, \tau_2, \tau_3 \in T_v \mathbb{S}^{n-1} \simeq v^\perp$ satisfy
\[
\tau_1 + \tau_2 + \tau_3 = 0.
\]
Thus $\tau_1, \tau_2, \tau_3$ are linearly dependent and span a subspace of dimension at most $2$. Let $T_v \subset v^\perp$ be this $2$-dimensional subspace. Each of the three neighboring vertices $w_i$ ($i=1,2,3$) satisfies
\[
w_i = \cos \theta_i \, v + \sin \theta_i \, u_i, \qquad u_i \in \mathbb{S}^{n-1} \cap v^\perp,
\]
where $u_i$ is parallel to the initial tangent vector of the geodesic from $v$ to $w_i$. Hence $u_i \in T_v$, up to scaling, so $w_i \in \operatorname{span}\{v\} \oplus T_v$, a $3$-dimensional subspace of $\mathbb{R}^n$. Since this holds for every vertex and the graph is connected, all four vertices $v_1,v_2,v_3,v_4$ lie in the same $3$-dimensional subspace $W \subset \mathbb{R}^n$. Therefore
\[
\mathcal{N} \subset W \cap \mathbb{S}^{n-1} \cong \mathbb{S}^2,
\]
It is therefore enough to prove the statement for $n=3$.
\medskip

\textit{Second,} we show that the complement consists of four equiangular spherical triangles. \\ 
The six geodesic arcs on $\mathbb{S}^2$ form the $1$-skeleton of the complete graph $K_4$. Since edges are shortest geodesics, the graph is embedded without crossings. By Euler's formula,
\[
V - E + F = 2 \quad \Rightarrow \quad 4 - 6 + F = 2 \quad \Rightarrow \quad F = 4.
\]
Thus the network bounds exactly four spherical regions, each a triangle. At every vertex, the three incident edges meet at $120^\circ$ by the equilibrium condition, so each spherical triangle is \emph{equiangular} with all angles equal to $120^\circ$.
\medskip

\textit{Third,} we show that each spherical triangle is equilateral. \\ 
Consider one of the four spherical triangles with angles $A = B = C = 120^\circ$. By the spherical law of sines,
\[
\frac{\sin a}{\sin A} = \frac{\sin b}{\sin B} = \frac{\sin c}{\sin C}.
\]
Since $A = B = C$, it follows that $\sin a = \sin b = \sin c$. As each side length lies in $(0,\pi)$, where $\sin$ is injective, we conclude that
\[
a = b = c.
\]
Thus each spherical triangle is equilateral. Since every edge of $\mathcal{N}$ is a side of one of these triangles, all six edges have the same length $\sigma \in (0,\pi)$.
\medskip

\textit{Fourth,} we determine the value of the common side length and identify the configuration. \\ 
Applying the spherical law of cosines to an equilateral spherical triangle with angles $120^\circ$, we obtain
\[
\cos \sigma = \cos^2 \sigma + \sin^2 \sigma \cos(120^\circ),
\]
which simplifies to $\cos \sigma = -\tfrac{1}{3}$ (excluding the degenerate case). Hence $\sigma = \arccos(-1/3)$, and the four vertices form a regular tetrahedron.
\medskip

\textit{At last,} the degenerate cases ($\sigma = 0$ or $\sigma = \pi$) are excluded because they would violate either the finite union of arcs condition, the three-edges-per-vertex condition, or the proper embedding requirement. Thus, the proof is complete.
\end{proof}

\section{Rigidity of Free Boundary Minimal $T$-surfaces spanning  the Regular Tetrahedral  Network}\label{sec:uniq. FBM TS with NT}

\begin{theorem}\label{Thm:uniq. FBMT with NT}
Let \(\Sigma \subset \mathbb{B}^n\) be a free-boundary minimal $2$-dimensional $T$-surface with exactly one $T$-point, four $Y$-junction edges (along each of which three faces meet at $120^\circ$), and six faces. Assume that
\begin{itemize}
\item \(\partial \Sigma \cap \mathbb{S}^{n-1}\) is a stationary geodesic network with tetrahedral combinatorics,
\item \(\Sigma\) meets \(\mathbb{S}^{n-1}\) orthogonally.
\end{itemize}
Then there exists \(A \in \mathrm{O}(n)\) such that \(\Sigma = A(TC)\), where \(TC\) is the $T$-cone over the regular tetrahedron centered at the origin.
\end{theorem}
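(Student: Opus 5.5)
By Lemma~\ref{lem: uniq. of RT-network}, after an orthogonal transformation we may assume that the boundary network $\partial\Sigma\cap\mathbb{S}^{n-1}$ is exactly the regular tetrahedral network $\mathcal{NT}=TC\cap\mathbb{S}^2$. Its vertices are $v_1,\dots,v_4$, and the edges are the great-circle arcs between them. The plan is to show that every face $\Sigma_j$ coincides with the flat sector of $TC$ spanned by the corresponding boundary arc.

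\textbf{Step 1: each face is planar.} Take a face $\Sigma_j$ whose free boundary arc $\partial'\Sigma_j$ is a great-circle arc from $v_a$ to $v_b$. This arc lies in a $2$-plane $\Pi_j$ through the origin. Along the arc, $\Sigma_j$ meets the sphere orthogonally, so its conormal is the position vector $x$. The tangent plane of $\Sigma_j$ at $x$ is therefore spanned by $x$ and the arc tangent, and this is exactly $\Pi_j$. Thus $\Sigma_j$ is tangent to the plane $\Pi_j$ along a whole curve that lies in $\Pi_j$.

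I would then use Cauchy--Kovalevskaya uniqueness for minimal surfaces, a Björling-type argument. Both the flat sector $\Pi_j\cap\mathbb{B}^n$ and $\Sigma_j$ are minimal and share the same Cauchy data along the arc. In isothermal coordinates on $\Sigma_j$, the conformal harmonic immersion is determined by the real-analytic curve and its conormal. So $\Sigma_j$ agrees with the planar piece near the arc. By analyticity of minimal surfaces and unique continuation, all of $\Sigma_j$ lies in $\Pi_j$.

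\textbf{Step 2: identify the junctions.} The three faces meeting along a junction $\tilde\Gamma_k$ correspond to the three boundary edges at a vertex $v_k$. That vertex is a common endpoint of the junction, being the point where the junction hits the sphere. So $\tilde\Gamma_k$ lies in the intersection of the planes of these three faces. Each of the three planes contains $0$ and $v_k$, and two distinct such planes meet exactly in the line $\mathbb{R}v_k$. Hence $\tilde\Gamma_k\subset \mathbb{R}v_k$. The junction is connected, runs from the $T$-point to $v_k$, and lies in $\mathbb{B}^n$, so it is a segment of the ray through $v_k$.

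All four junctions share the $T$-point $p$, so $p$ lies on all four lines $\mathbb{R}v_k$. These lines meet only at $0$, since the $v_k$ are pairwise non-parallel. Hence $p=0$ and $\tilde\Gamma_k=\Gamma_k$.

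\textbf{Step 3: conclude.} Each $\Sigma_j$ is now a planar domain in $\Pi_j$ bounded by $\Gamma_{j_1}$, $\Gamma_{j_2}$ and the arc $\partial'\Sigma_j$. Such a domain is exactly the flat sector $F_j$, so $\Sigma=TC$ up to the initial orthogonal map. The $120^\circ$ condition then holds automatically and is consistent with this.

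\textbf{Main obstacle.} I expect Step~1 to be the hardest, namely justifying the Björling uniqueness up to the free boundary. This requires enough boundary regularity of $\Sigma_j$ along $\partial'\Sigma_j$: real-analyticity, or at least $C^{1,\alpha}$ plus a reflection argument. My first attempt would be a reflection argument: reflect $\Sigma_j$ across the plane $\Pi_j$ in the normal directions, and use the fact that the components of $\Sigma_j$ orthogonal to $\Pi_j$ are harmonic with vanishing Dirichlet and Neumann data on the arc. Holmgren or unique continuation then forces them to vanish identically.
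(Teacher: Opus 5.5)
Your argument is correct and follows the same outline as the paper's proof: reduce to $\mathcal{NT}$ by the network lemma, show each face is tangent to its plane $\Pi_j$ along its free arc, deduce planarity, then locate the junctions. Both key steps, however, are carried out differently.

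\emph{Planarity.} The paper reflects each face across $\mathbb{S}^{n-1}$ (citing Choe) so that the arc becomes interior. It then argues with the subharmonic function $\rho=\mathrm{dist}^2(\cdot,\Pi)$, using unique continuation and a maximum principle. Your fallback argument is the better one, and I would make it the main proof rather than the Bj\"orling phrasing. For each unit $\nu\perp\Pi_j$, the function $f=\langle u,\nu\rangle$ is harmonic, and both $f$ and its normal derivative vanish on the arc. In a conformal chart where the arc is a real segment, $f_x-if_y$ is therefore holomorphic and vanishes on that segment, so $f\equiv0$ by Schwarz reflection. This needs only $C^1$ regularity up to the arc and no reflection of the surface. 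It also genuinely uses the tangency, whereas $\rho$ does not. Indeed $\nabla\rho=0$ holds automatically on $\{\rho=0\}$, and subharmonic functions satisfy a maximum principle, not a minimum principle. On a catenoid, for example, $\rho=x_3^2$ vanishes to first order along an arc of the neck circle without vanishing identically. So your linear-function formulation is what actually makes this step work.

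\emph{Junctions.} The paper asserts that planar faces force straight junctions. It then uses orthogonality of the junctions to the sphere to make them pass through $0$. Your argument is more direct: two of the three planes through $v_k$ are distinct, so $\tilde\Gamma_k\subset\mathbb{R}v_k$. Since the four lines $\mathbb{R}v_k$ meet only at $0$, you get straightness and $p=0$ at once. This does not need the junctions to meet the sphere orthogonally at the corners $v_k$.
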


\begin{proof}
By the uniqueness of stationary geodesic networks with tetrahedral combinatorics (Lemma \ref{lem: uniq. of RT-network}), after applying an orthogonal transformation, we may assume that \(\partial \Sigma \cap \mathbb{S}^{n-1} = \mathcal{NT}\), the regular tetrahedral network lying in a $3$-dimensional subspace $W \subset \mathbb{R}^n$.
\medskip

Let \(p \in \Sigma\) be the unique $T$-point and let \(F\) be any one of the six faces of \(\Sigma\). Then \(\partial F = \sigma \cup \Gamma_1 \cup \Gamma_2\), where \(\sigma \subset \mathcal{NT}\) is a geodesic arc on \(\mathbb{S}^{n-1}\) and \(\Gamma_1, \Gamma_2\) are $Y$-junction curves emanating from \(p\).\\
Let \(\Pi\) be the $2$-plane in \(\mathbb{R}^n\) containing the origin and the great-circle arc \(\sigma \subset \mathcal{NT}\). Since \(\sigma\) is a geodesic arc on \(\mathbb{S}^{n-1}\), \(\Pi\) is invariant under reflection across the unit sphere. By the reflection principle for free-boundary minimal surfaces meeting the sphere orthogonally, Choe \cite{choe2025free}, we may reflect the face \(F\) across \(\mathbb{S}^{n-1}\) along \(\sigma\) to obtain a minimal surface \(F^*\) such that the union
\[
\widetilde{F} := F \cup F^*
\]
is a smooth, in fact real-analytic, minimal surface in a neighborhood of \(\sigma\), with \(\sigma\) now lying in the interior of \(\widetilde{F}\).
\medskip

Because \(F\) meets \(\mathbb{S}^{n-1}\) orthogonally along the boundary arc \(\sigma\), the free-boundary condition implies that the outward conormal of \(\partial F\) in \(F\) coincides with the outward unit normal of the ball, which is the position vector \(q\). In other words,
\[
q \in T_q F \quad \text{for every } q \in \sigma.
\]
Moreover, the tangent vector \(\tau\) to the geodesic arc \(\sigma\) at \(q\) also belongs to \(T_q F\) (since \(\sigma \subset \partial F\)).\\
On other hand,  \(T_q \Pi = \Pi\), that is because \(\Pi\) passes through the origin, and \(\Pi\) is precisely the $2$-plane spanned by the two vectors \(q\) and \(\tau\) (note that \(\tau \perp q\) since \(\tau \in T_q \mathbb{S}^{n-1}\)). Therefore the two vectors \(q\) and \(\tau\) span exactly \(T_q \Pi\), so
\[
T_q F = T_q \Pi \quad \text{for all } q \in \sigma.
\]
The same holds for the reflected piece: \(T_q F^* = T_q \Pi\) for all \(q \in \sigma\) (because the reflection across \(\mathbb{S}^{n-1}\) preserves \(\Pi\) and respects the orthogonal condition). Therefore \(\widetilde{F}\) and the totally geodesic plane \(\Pi\) share the same tangent plane along the entire curve \(\sigma\).
\medskip

Now consider the squared distance function to the plane \(\Pi\),
\[
\rho(x) := \mathrm{dist}^2(x, \Pi),
\]
restricted to \(\widetilde{F}\). Since \(\widetilde{F}\) is minimal and \(\Pi\) is a linear subspace, the function \(\rho\) is subharmonic on \(\widetilde{F}\), this follows from the fact that the Hessian of \(\rho\) along minimal surfaces satisfies the required sign condition; see e.g.\ Colding--Minicozzi \cite{colding2011course} or standard computations for distance functions to affine subspaces.\\
Along the curve \(\sigma\) %(now an interior curve of \(\widetilde{F}\)),     
we have
\[
\rho \equiv 0 \quad \text{and} \quad \nabla \rho \equiv 0 \quad %\text{(since tangent planes agree)}.
\]
Thus \(\rho\) vanishes to first order along an open segment of the interior of \(\widetilde{F}\).\\
Since \(\widetilde{F}\) is real-analytic and $\rho$ is real-analytic, and since $\rho$ vanishes to first order along the interior curve $\sigma$, it follows by unique continuation that $\rho \equiv 0$ in a neighborhood of $\sigma$.
In fact, since the subharmonic \(\rho \geq 0\) everywhere, if \(\rho\) is not identically zero on the connected component of \(\widetilde{F}\) containing \(\sigma\), then by the unique continuation/strong maximum principle for subharmonic functions, %(or the Hopf boundary point lemma applied at points of \(\sigma\)),                 
\(\rho\) cannot attain its minimum $0$ along an interior set unless it is constantly zero in a neighborhood. But \(\rho \equiv 0\) along \(\sigma\), so we conclude that
\[
\rho \equiv 0 \quad \text{in a neighborhood of } \sigma \text{ inside } \widetilde{F}.
\]
In other words, \(\widetilde{F}\) lies in \(\Pi\) near \(\sigma\).\\
Since \(\widetilde{F}\) is a real-analytic minimal surface in the interior and it coincides with \(\Pi\) on an open set, by the unique continuation principle for real-analytic functions %(or identity theorem for minimal surfaces), 
we have
\[
\widetilde{F} \subset \Pi
\]
in the connected component containing \(\sigma\). As \(F\) is connected and contained in one side of the reflection, it follows that
\[
F \subset \Pi.
\]
This holds for every face of \(\Sigma\). Therefore each of the six faces lies in a $2$-plane through the origin. Given that the $Y$-junction curves are minimal and connect the unique $T$-point to the boundary vertices while satisfying the $120^\circ$ condition in each adjacent face, and now knowing the faces are planar, the only possibility is that each $Y$-junction is a straight line segment from the $T$-point to the corresponding boundary vertex.\\ %(by the convexity of geodesics in flat planes and the rigidity of planar triple junctions at $120^\circ$).
Finally, Since the surface $\Sigma$ is a free boundary surface in $\mathbb{B}^n$, the $Y$-junctions must meet the sphere $\mathbb{S}^{n-1}$ orthogonally, which implies that these straight line segments must pass through the origin. Thus, the single $T$-point, as the vertex of the four edges formed with the straight $Y$-junctions, must be the origin. Therefore, each junction is radial and \(\Sigma\) is precisely the flat $T$-cone over the regular tetrahedral network \(\mathcal{NT}\), as desired. 
\end{proof}

\section{Rigidity of the Free-Boundary Minimal Tetrahedral Cone}\label{sec:uniq. Tcone}

\begin{theorem}\label{thm:Tcone}
Let $TC = \bigcup_{j=1}^{6} F_j$ be the flat $T$--cone. Suppose 
$u : TC \to \mathbb{B}^n$ is a conformal minimal immersion such that 
$u(TC)$ meets $\partial \mathbb{B}^n$ orthogonally. Then \[
u(TC) = A(TC) \quad \text{for some } A \in \mathrm{O}(n).
\]
\end{theorem}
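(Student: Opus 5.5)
The plan follows the outline in the introduction. We show that the image of each boundary arc $\partial' F_j$ is a geodesic arc of $\mathbb{S}^{n-1}$. We then check that the images of the four junction endpoints $u(v_i)$ form a stationary geodesic network with tetrahedral combinatorics. Finally we invoke Lemma~\ref{lem: uniq. of RT-network} and Theorem~\ref{Thm:uniq. FBMT with NT}.

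\emph{Step 1: Hopf differential on each face.} On each face $F_j$ we use the flat conformal coordinate $z$ of the sector $F_j\cong\hat D$ and the quadratic differential $\Phi_j=\langle u_{j,zz},u_{j,zz}\rangle\,dz^2$. Since $u_j$ is conformal and harmonic, $\Phi_j$ is holomorphic. It is convenient to use the polar coordinate $w=\log z$ on the sector, in which the boundary arc $\partial'F_j$ becomes a vertical segment $\mathrm{Re}\,w=0$. The junctions become horizontal lines.

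Along $\partial'F_j$ the free-boundary condition gives $u_r\parallel u$, hence $u_r=\lambda u$ with $|u|=1$. A Nitsche/Fraser--Schoen type computation then shows that $r^2\Phi_j(\partial_r,\partial_r)$ is real on $\partial'F_j$.

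Along a junction $\Gamma_k$ the three faces share the same curve $\tilde\Gamma_k$ with the same parametrization. Conformality and the compatible flat metric force equal conformal factors there. The balancing $\tau_1+\tau_2+\tau_3=0$, combined with the fact that each conormal derivative has the same length as the tangential derivative, lets us show that $\sum_{j\ni k}\mathrm{Im}\,\Phi_j$ vanishes along $\Gamma_k$. This uses $\langle u_{tt},\tau_j\rangle$ summed, i.e.\ the geodesic curvature condition $\kappa_i+\kappa_j+\kappa_k=0$.

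\emph{Step 2: vanishing of the Hopf differentials.} Set $\psi_j=\mathrm{Im}(z^2\Phi_j)$, a harmonic function on each sector. By Step 1 it vanishes on the spherical arcs, and the functions on the three sheets sum to zero along each junction. Near the vertex, the tetrahedral configuration (iv) together with $C^{1,\alpha}$ regularity shows that $z^2\Phi_j\to0$.

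We then consider $\Psi=\sum_j \psi_j^2$, or more precisely an energy identity: integrate $|\nabla\psi_j|^2$ over all faces and integrate by parts. The boundary terms on spherical arcs vanish because $\psi_j=0$ there. The junction terms cancel thanks to the sum condition together with the matching of normal derivatives, which comes from the Cauchy--Riemann equations for the real parts. This forces $\psi_j\equiv$ const $=0$, so $z^2\Phi_j$ is a real constant $c_j$.

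Holomorphy at the vertex, with the bound $|\Phi_j|=O(|z|^{-2+\epsilon})$, then forces $c_j=0$. Hence $\Phi_j\equiv0$, so every radial line and every arc in $F_j$ maps to a curve of vanishing normal curvature in the corresponding direction.

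\emph{Step 3: geodesic boundary and conclusion.} With $\Phi_j=0$ on $\partial'F_j$, the boundary curve $u(\partial'F_j)$ has acceleration with no component normal to $u(F_j)$. Orthogonality puts its curvature vector in the span of $u$, so it is a geodesic of $\mathbb{S}^{n-1}$.

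At a junction endpoint, the three boundary arcs meet. Their unit tangents there are the conormals $\tau_i$ of the faces restricted to $T\mathbb{S}^{n-1}$, since the junction meets the sphere orthogonally. Therefore $\tau_1+\tau_2+\tau_3=0$ gives the $120^\circ$ condition.

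The combinatorics are inherited from $TC$: four vertices, with each pair joined by the boundary arc of exactly one face. Thus $\partial\Sigma$ is a stationary geodesic network with tetrahedral combinatorics, and Theorem~\ref{Thm:uniq. FBMT with NT} yields $u(TC)=A(TC)$.

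The main obstacle is Step 2, specifically the junction coupling. For a single disk, Nitsche's argument uses only one boundary condition. Here the three Hopf differentials interact only through a sum condition, and one must verify that the integration by parts closes, i.e.\ that the Neumann data also match along $\Gamma_k$. If it does not close, the fallback is a reflection/doubling argument across each junction, treating the three sheets as a holomorphic section over a branched cover.
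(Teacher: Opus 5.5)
There is a genuine gap, and it is the one you flag yourself: Step 2 does not follow from Step 1. Write $\Phi_j=\phi_j\,dz^2$, let $A_j$ be the second fundamental form of $\Sigma_j$, and let $\theta=0,\ \theta=\theta_0$ be the radial edges of $F_j$. On a junction $\Gamma_k$, Step 1 gives only one scalar relation, $\sum_{j\ni k}\epsilon_j\psi_j=0$. Here $\epsilon_j=\pm1$ records which radial edge of $F_j$ is $\Gamma_k$; the direction of $u_\theta$ relative to the outward conormal flips between the two edges, so this sign must be tracked. That information alone cannot force $\psi_j\equiv0$. Take harmonic $\psi_1,\psi_2$ on two sheets at $\Gamma_k$ with data $f$ and $-\epsilon_1\epsilon_2 f$ on $\Gamma_k$, where $f$ is supported away from the vertex and the sphere. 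Give them zero data on all other edges, and set $\psi_j\equiv0$ on the other four sheets. Every constraint you have derived then holds, yet $\psi\not\equiv0$.

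Concretely, your energy identity leaves the junction terms $\sum_{j\ni k}\int_{\Gamma_k}\psi_j\,\partial_{n_j}\psi_j$. These cancel only if $\epsilon_j\partial_{n_j}\psi_j$ is the same on the three sheets. By Cauchy--Riemann in $w=\log z$, that means the real parts $\mathrm{Re}(z^2\phi_j)$ agree along $\Gamma_k$ up to a constant. Let $e$ and $\vec\kappa$ be the unit tangent and curvature vector of $\tilde\Gamma_k$, let $\nu_j$ be the conormal of $\Sigma_j$, and set $\kappa_j=\langle\vec\kappa,\nu_j\rangle$. Then $\mathrm{Re}(z^2\phi_j)=c\,\bigl(|A_j(e,e)|^2-|A_j(e,\nu_j)|^2\bigr)$, with $c>0$ common to the three sheets and $|A_j(e,e)|^2=|\vec\kappa|^2-\kappa_j^2$. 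Balancing, however, yields only $\sum_j\kappa_j=0$ and $\sum_jA_j(e,\nu_j)=0$; the latter is exactly what makes your Step 1 sum vanish. So the Neumann data differ as soon as the junction bends inside the faces, which you cannot exclude a priori. The fallback does not repair this either: a sum relation among three sheets does not define a reflection of one sheet into another.

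The paper avoids a transmission problem altogether. It places all six faces on the same quarter-disk $\hat D$ and adds the six Hopf differentials into one holomorphic function. It asserts that the imaginary part vanishes pointwise on $\gamma_1\cup\gamma_2$, using the common values of $u_r,u_{rr},u_{\theta\theta}$ together with $Y$-balance. It then concludes that the function is a real constant vanishing at the vertex and reads off $(u_j)^\perp_{rr}=(u_j)^\perp_{r\theta}=0$ on $\sigma$.

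Do not import this device uncritically. The pointwise cancellation on $\gamma_1$ needs the six faces to split into two junction triples, each triple carrying its junction on $\gamma_1$ with the same orientation. The two junctions $k\neq l$ involved share the face $kl$, which cannot carry both of them on $\gamma_1$. (Such a device is natural for the $Y$-cone, where all sheets share one junction.) So the junction coupling is the real difficulty, and it needs an idea that neither your energy identity nor a naive sum supplies.

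A smaller point: for $n\ge4$, $\Phi_j\equiv0$ only gives $|A_j(e_1,e_1)|=|A_j(e_1,e_2)|$ and $A_j(e_1,e_1)\perp A_j(e_1,e_2)$. Your remark about radial lines and arcs is therefore false in the interior. On $\partial'F_j$ you must combine $\Phi_j=0$ with $u_{r\theta}^\perp=0$, and that is all Step 3 needs.
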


\begin{proof}
    The argument follows the same strategy as the uniqueness proof for the flat $Y$-cone in~\cite{matinpour2025cone}, adapted to the tetrahedral combinatorics of the $T$-cone. We work on each quarter-disk sector of the domain and construct a holomorphic function on that sector whose boundary behavior, combined with the junction conditions, forces the relevant second derivatives to vanish along the free-boundary arcs. This implies that those arcs are geodesic on~$\mathbb{S}^{n-1}$, reducing the problem to the rigidity statement of Theorem~\ref{Thm:uniq. FBMT with NT}.
\medskip

Let $TC = \bigcup_{j=1}^{6} F_j$ be the flat $T$-cone consisting of six planar faces $F_j$, each parameterized by the quarter-disk $\hat{D}\subset\mathbb{C}$ (with polar coordinates $(r,\theta)$) meeting along four junction curves $\Gamma_j$ in the tetrahedral configuration. Suppose
\[
u = (u_1,\dots,u_6) : TC \to \mathbb{B}^n
\]
is a conformal minimal immersion such that $u(TC)$ meets $\partial\mathbb{B}^n$ orthogonally. Write
\[
\Sigma = u(TC) = \bigcup_{j=1}^{6} \Sigma_j, \quad \Sigma_j = u_j(\hat{D}),
\]
where each $\Sigma_j$ is a smooth minimal surface. The boundary $\partial\Sigma$ lies on the unit sphere $\mathbb{S}^{n-1}$ and meets it orthogonally.
\medskip

We work throughout in the Euclidean inner product on $\mathbb{R}^n$, extended $\mathbb{C}$-bilinearly to $\mathbb{C}^n$. On each quarter-disk $\hat{D}$, the map $u_j$ is harmonic and conformal:
\[
(u_j)_{z\bar{z}}=0, \qquad (u_j)_z\cdot(u_j)_z=0.
\]
In polar coordinates the Laplacian reads
\[
\Delta = \partial_{rr} + \frac{1}{r}\partial_r + \frac{1}{r^2}\partial_{\theta\theta},
\]
so harmonicity is equivalent to
\[
(u_j)_{rr} + \frac{1}{r}(u_j)_r + \frac{1}{r^2}(u_j)_{\theta\theta} = 0.
\]
Conformality $(u_j)_z\cdot(u_j)_z=0$ translates to the orthogonality of coordinate lines:
\[
(u_j)_r\cdot(u_j)_\theta=0.
\]
The derivatives of each $u_j$ extend continuously up to the boundary arcs of $\hat{D}$ (the two radial junction segments $\gamma_1,\gamma_2$ and the free-boundary arc $\sigma$). Along each junction curve three sheets meet at $120^\circ$ angles (the local $Y$-balance condition), and the radial and second radial derivatives match across the junction:
\begin{equation}
\label{eq:matching-r}
u_i(p)=u_k(p)=u_l(p),\ \
(u_i)_r(p)=(u_k)_r(p)=(u_l)_r(p),\ \
(u_i)_{rr}(p)=(u_k)_{rr}(p)=(u_l)_{rr}(p)
\end{equation}
whenever $F_i,F_k,F_l$ meet along that curve. Substituting into the harmonicity equation yields
\begin{equation}
\label{eq:matching-theta2}
(u_i)_{\theta\theta}(p)=(u_k)_{\theta\theta}(p)=(u_l)_{\theta\theta}(p)
\end{equation}
along the junction. The $Y$-balance further gives
\begin{equation}
\label{eq:Y-balance}
(u_i)_\theta(p)+(u_k)_\theta(p)+(u_l)_\theta(p)=0.
\end{equation}
Along the free-boundary arc $\sigma$ we have $u_j(\sigma)\subset\mathbb{S}^{n-1}$ and the radial derivative is normal to the sphere, hence
\[
(u_j)_r=f\,u_j,
\]
for a scalar function $f$. Differentiating tangentially yields
\[
(u_j)_{r\theta}=f_\theta u_j + f\,(u_j)_\theta,
\]
so the normal component vanishes:
\begin{equation}
\label{eq:perp}
(u_j)_{r\theta}^\perp=0\qquad\text{along }\sigma.
\end{equation}
By the classical theory of conformal minimal immersions (see Theorem~1.1 in~\cite{matinpour2025cone}), the squared normal part of the second derivative
\[
Q_j^\perp(z):=(u_j)_{zz}^\perp
\]
satisfies $(Q_j^\perp)^2=(u_j)_{zz}^2$ and is therefore holomorphic on $\hat{D}$. In polar coordinates,
\[
u_{j,zz}=\frac14 e^{-2i\theta}\Bigl[(u_j)_{rr}-\frac{(u_j)_r}{r}-\frac1{r^2}(u_j)_{\theta\theta}-i\Bigl(\frac{2(u_j)_{r\theta}}{r}-\frac{2(u_j)_\theta}{r^2}\Bigr)\Bigr],
\]
and
\[
(u_j)_{zz}^\perp=\frac14 e^{-2i\theta}\Bigl[(u_j)_{rr}^\perp-\frac1{r^2}(u_j)_{\theta\theta}^\perp-\frac{2i}{r}(u_j)_{r\theta}^\perp\Bigr].
\]
Define the summed holomorphic quadratic differential on $\hat{D}$ by
\[
h(z)=\sum_{j=1}^6 Q_j^2(z)=\sum_{j=1}^6(Q_j^\perp)^2(z).
\]
A direct expansion gives
\[
h(z)=\frac{e^{-4i\theta}}{16}\sum_{j=1}^6\Bigl[(u_j)_{rr}-\frac{(u_j)_r}{r}-\frac1{r^2}(u_j)_{\theta\theta}-i\Bigl(\frac{2(u_j)_{r\theta}}{r}-\frac{2(u_j)_\theta}{r^2}\Bigr)\Bigr]^2
\]
and the analogous expression for the normal part. Set $H(z)=z^4 h(z)$. Its imaginary part reads
\[
\operatorname{Im} H(z)=\sum_{j=1}^6-\frac{ir^3}{4}(u_j)_{r\theta}^\perp\Bigl((u_j)_{rr}^\perp-\frac1{r^2}(u_j)_{\theta\theta}^\perp\Bigr).
\]
Condition~\eqref{eq:perp} implies that $H(z)$ is real along $\sigma$. Along the junction segments $\gamma_1\cup\gamma_2$ the matching conditions~\eqref{eq:matching-r}--\eqref{eq:matching-theta2} make the coefficient $(u_j)_{rr}-\frac{(u_j)_r}{r}-\frac1{r^2}(u_j)_{\theta\theta}$ independent of $j$, while the $Y$-balance~\eqref{eq:Y-balance} forces the sum of the coefficients $\frac{2(u_j)_{r\theta}}{r}-\frac{2(u_j)_\theta}{r^2}$ to vanish. Consequently $\operatorname{Im} H(z)=0$ along $\gamma_1\cup\gamma_2$ as well.\\

Thus $H(z)$ is holomorphic on $\hat{D}$ and real-valued on the entire boundary $\partial\hat{D}=\gamma_1\cup\gamma_2\cup\sigma$. By the Schwarz reflection principle, or equivalently, the fact that a holomorphic function real on the boundary of a disk is constant, $H(z)$ is constant. Since $H(0)=0$, we conclude $H\equiv0$ on $\hat{D}$. Inspecting the real and imaginary parts yields
\[
(u_j)_{r\theta}^\perp=0\qquad\text{and}\qquad(u_j)_{rr}^\perp-(u_j)_{\theta\theta}^\perp=0\qquad\text{along }\sigma.
\]
Minimality now implies $(u_j)_{rr}^\perp=(u_j)_{\theta\theta}^\perp=0$ along $\sigma$, so the second fundamental form of $\Sigma_j$ vanishes on the interior boundary arc $\partial'\Sigma_j:=\partial\Sigma_j\cap\mathbb{S}^{n-1}$. In particular the second fundamental form of $\partial'\Sigma_j$ in $\mathbb{S}^{n-1}$ vanishes, hence each $\partial'\Sigma_j$ is a geodesic arc (piece of a great circle) on the sphere.\\
Therefore the boundary $\partial\Sigma$ consists of geodesic arcs on $\mathbb{S}^{n-1}$ and forms a stationary geodesic network with tetrahedral combinatorics. Moreover $\Sigma$ is a free-boundary minimal $T$-surface spanning this network and meeting $\mathbb{S}^{n-1}$ orthogonally. Theorem~\ref{Thm:uniq. FBMT with NT} then yields the existence of an orthogonal transformation $A\in\mathrm{O}(n)$ such that $\Sigma=A(TC)$. This completes the proof of Theorem~\ref{thm:Tcone}.
\end{proof}

Theorem \ref{thm:uniq.PlateauSurfaces} then follows as a corollary to [Theorem 2.1 \cite{fraser2015uniqueness}], [Theorem 1.1 \cite{matinpour2025cone}], and  Theorem \ref{thm:Tcone}.

\end{document}